\newtheorem{teo}{Theorem}[section]
\newtheorem{lem}[teo]{Lemma}
\newtheorem{prop}[teo]{Proposition}
\newtheorem{defi}[teo]{Definition}
\newtheorem{rem}[teo]{Remark}
\newcommand{\beq}{\begin{equation}}
\newcommand{\eeq}{\end{equation}}
\newcommand{\bea}{\begin{eqnarray}}
\newcommand{\eea}{\end{eqnarray}}
\renewcommand{\to}{\ensuremath{\longrightarrow}}
\newcommand{\cP}{\ensuremath{\mathcal{P}}}
\newcommand{\cL}{\ensuremath{\mathcal{L}}}
\newcommand{\cQ}{\ensuremath{\mathcal{Q}}}
\newcommand{\cU}{\ensuremath{\mathcal{U}}}
\newcommand{\cV}{\ensuremath{\mathcal{V}}}
\newcommand{\cD}{\ensuremath{\mathcal{D}}}
\newcommand{\cS}{\ensuremath{\mathcal{S}}}
\newcommand{\M}{{\mathrm{M}}}
\newcommand{\bS}{{\mathbf{S}}}
\newcommand{\bD}{{\mathbf{D}}}
\newcommand{\bU}{{\mathbf{U}}}\newcommand{\bV}{{\mathbf{V}}}
\newcommand{\s}{{\mathrm{S}}}
\newcommand{\1}{\ensuremath{{\bf 1}}}
\DeclareMathOperator{\rk}{rank}
\DeclareMathOperator{\PG}{PG}
\DeclareMathOperator{\GF}{GF}
\DeclareMathOperator{\GL}{GL}
\DeclareMathOperator{\GQ}{GQ}
\begin{document}
\Large
\begin{center}
{\bf Invertible Symmetric 3 $\times$ 3 Binary Matrices\\ and GQ(2,\,4)}
\end{center}
\large \vspace*{-.1cm}
\begin{center}
Andrea Blunck,$^{1}$ P\'eter L\'evay,$^{2}$ Metod Saniga$^{3}$ and P\'eter
Vrana$^{2}$
\end{center}
\vspace*{-.4cm} \normalsize
\begin{center}
$^{1}$Department of Mathematics, University of Hamburg, D-20146 Hamburg,
Germany

\vspace*{.0cm} $^{2}$Department of Theoretical Physics, Institute of Physics,
Budapest University of\\ Technology and Economics, H-1521 Budapest, Hungary

\vspace*{.0cm} and

\vspace*{.0cm}

$^{3}$Astronomical Institute, Slovak Academy of Sciences\\
SK-05960 Tatransk\' a Lomnica, Slovak Republic

\vspace*{.2cm} (\today)

\end{center}

\vspace*{-.3cm} \noindent \hrulefill

\vspace*{.1cm} \noindent {\bf Abstract:}
We reveal an intriguing connection between the set of 27 (disregarding the identity) invertible symmetric $3 \times 3$  matrices over GF(2) and the points of the generalized quadrangle GQ$(2,4)$. The 15 matrices with eigenvalue one correspond to a copy of the subquadrangle GQ$(2,2)$, whereas the 12 matrices without eigenvalues have their geometric counterpart in the associated double-six. The fine details of this correspondence, including the precise algebraic meaning/analogue of collinearity, are furnished by employing the representation of GQ$(2,4)$ as a quadric in PG$(5,2)$ of projective index one. An interesting physical application of our findings is also mentioned.\\ \\
{\bf Keywords:} Binary Matrices of Order 3 -- GQ(2,\,4) -- PG(5,\,2): Quadratic Forms and Symplectic Polarity

\vspace*{.3cm}

\vspace*{-.2cm} \noindent \hrulefill

\section{Introduction}

The set of invertible symmetric $3\times 3$ matrices over the field $\GF(2)$ has $28$ elements. The elements different from the identity matrix can be divided in a natural way  into one set of $15$ matrices and two sets of $6$ matrices each. This suggests that there might be a connection to the generalized quadrangle $\GQ(2,4)$, which has a description using a subquadrangle of $15$ points and two additional sets of $6$ points each.

In this paper we show that indeed the set of $27$ invertible, non-identity matrices can be equipped with the structure of $\GQ(2,4)$. In Section 2, we recall some well known facts on generalized quadrangles, in particular, the two descriptions of $\GQ(2,4)$ we are going to work with. In Section 3 we study the set of invertible symmetric $3\times 3$ matrices over $\GF(2)$. In Section 4 we show that the $27$ matrices can be mapped bijectively onto a quadric of projective index one in $\PG(5,2)$, which can be seen as the point set of $\GQ(2,4)$. In Section 5, we give another description, where the points appear as planes in $\PG(5,2)$; in this section we also state an explicit isomorphism between the generalized quadrangle on the $27$ matrices and the model of $\GQ(2,4)$ mentioned above (Proposition \ref{iso}). Finally, in Section 6 we state an application to physics.

The geometry of matrices defined over fields/division rings represents an important branch of geometry (see, e.\,g., the monograph of Wan \cite{wan}). The elements of a matrix space are viewed as vertices of a graph where a particular symmetric and anti-reflexive adjacency relation is defined. The problem of describing all adjacency-preserving bijections between matrix spaces is then converted into that of finding all isomorphisms between the corresponding graphs. A prominent place in the field is occupied by symmetric matrices (e.\,g., \cite{wan1,wan2}), and in particular those defined over fields of characteristic two (e.\,g., \cite{gao,wan3}).  This paper deals, like \cite{wan3}, with a particular case of $3 \times 3$ symmetric binary matrices. Yet our approach is fundamentally different from that followed by Wan and/or others since we focus exclusively on invertible matrices and instead of the concept of adjacency in a graph we use that of collinearity in a particular well-known finite geometry (here the generalized quadrangle of type GQ$(2,4)$).

\section{The generalized quadrangle GQ(2,\,4)}

A finite \emph{generalized quadrangle} is a point-line incidence geometry
$(\cP,\cL)$ satisfying the following axioms (see \cite[1.1]{pt}), where $s,t\ge
1$ are fixed natural numbers:
\begin{itemize}
\item Each point is incident with $1+t$ lines and any two points are joined
    by at most one line.
\item Each line is incident with $1+s$ points and any two lines meet in at
    most one point.
\item For each pair $(p,L)\in\cP\times\cL$ with $p\notin L$ there is a
    unique pair $(q,M)\in\cP\times\cL$ such that $p\in M$, $q\in M$, $q\in
    L$.
\end{itemize}
The pair $(s,t)$ is called the \emph{order} of $(\cP,\cL)$.
Examples of generalized quadrangles (GQs) can be obtained from quadratic forms
(see \cite[3.1.1]{pt}): Let $F$ be a finite field of order $s$ and let $q$ be a
quadractic form of index $2$ on $V=F^{d+1}$. This means that the associated
quadric $\cQ=\{Fv\mid v\in V, v\ne 0, q(v)=0\}$ in the $d$-dimensional
projective space $\PG(d,s)$ over $F$ contains lines but no planes ($\cQ$ has
\emph{projective index} $1$). Recall that such a quadratic form $q$ exists
exactly if $d=3,4$ or $5$.
Now consider the incidence structure $(\cQ,\cL)$, where $\cL$ is the set of
lines of $\PG(d,s)$ contained in $\cQ$. Then $(\cQ,\cL)$ is a generalized
quadrangle of order $(s,t)$, where $t=1$ if $d=3$, $t=s$ if $d=4$, and $t=s^2$
if $d=5$.
This GQ is called an \emph{orthogonal quadrangle}. Since there is up to
equivalence only one quadratic form of index $2$ on each of the vector spaces
under consideration, a finite orthogonal quadrangle is uniquely determined by
the parameters $d$ and $s$ and thus is called $Q(d,s)$ for short.
Note that one can also define infinite generalized quadrangles and then one
similarly gets orthogonal quadrangles over infinite fields (see \cite{hvm}).

We are going to study the orthogonal quadrangle $Q(5,2)$ whose point set $\cQ$
is a quadric in $\PG(5,2)$. One can show (\cite[5.3.2]{pt}) that there is (up
to isomorphism) only one generalized quadrangle of order $(2,4)$, namely,
$Q(5,2)$. For this reason the generalized quadrangle $Q(5,2)$ is also called
$\GQ(2,4)$. It has $(s+1)(s^3+1)=27$ points and $(s^2+1)(s^3+1)=45$ lines.
Non-degenerate hyperplane sections of $\cQ$ give rise to subgeometries of
$Q(5,2)$ that are ismorphic to $Q(4,2)\cong \GQ(2,2)$, the unique GQ of order
$(2,2)$. There are 36 distinct copies of $\GQ(2,2)$ in $\GQ(2,4)$ \cite{sanigaetal}.

We present another model of $\GQ(2,4)$ (\cite[6.1]{pt}; see Figure 1): We start with a
subgeometry isomorphic to $\GQ(2,2)$ (sometimes called the \emph{doily}). Its point set $\cP'$ consists of the
$2$-element subsets of $\{1,2,3,4,5,6\}$ (so there are $15$ points), and the
line set $\cL'$ consists of the $3$-element subsets $\{A,B,C\}$ of $\cP'$ with
$A\cup B\cup C=\{1,2,3,4,5,6\}$ (so there are also $15$ lines).
Now we add $12$ extra points called $1,2,\ldots,6$, $1',2',\ldots,6'$ and the
$30$ extra lines $\{i,\{i,j\},j'\}$ (with $1\le i,j\le 6$, $i\ne j$). The thus
obtained incidence geometry $(\cP,\cL)$ is isomorphic to $\GQ(2,4)$ and depicted in Figure 1. We
sometimes call the additional points $1,2,\ldots,6$, $1',2',\ldots,6'$ the
\emph{double-six}. Its connection to the classical double-six of lines in
$3$-space is established, e.\,g., in \cite{freud}.

\begin{figure}[h]
\centerline{\includegraphics[width=7.5truecm,clip=]{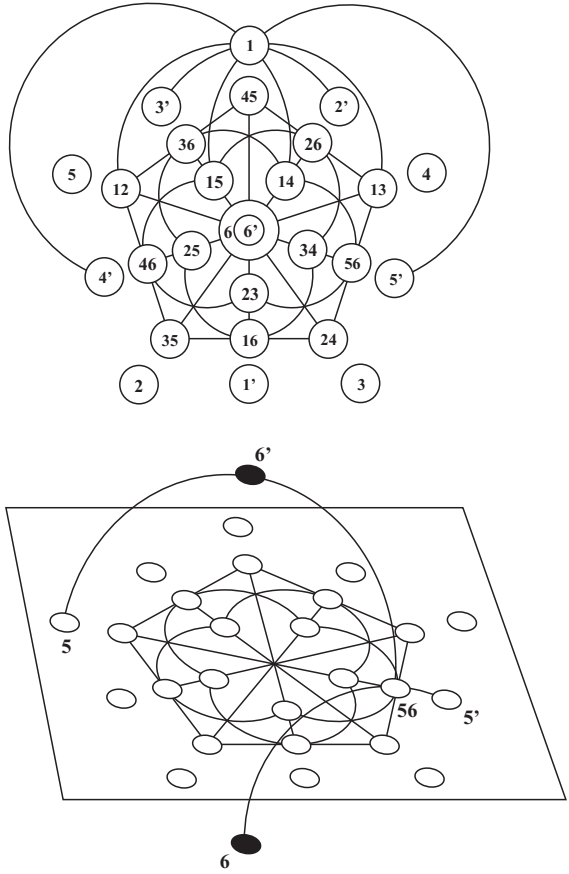}}
\vspace*{0.5cm} \caption{A diagrammatic illustration of the structure of the generalized quadrangle GQ$(2,4)$ built around the doily (after Polster \cite{polster}, slightly modified). In both the parts, each picture depicts the doily and all 27 points (circles) of GQ$(2,4)$. The top picture shows only 19 lines (line segments and arcs of circles) of GQ$(2,4)$, with the two points located in the middle of the doily being regarded as lying one above and the other below the plane the doily is drawn in. Sixteen out of the missing 26 lines can be obtained by successive rotations of the figure through 72 degrees around the center of the pentagon. The bottom picture shows a couple of lines which go off the doily's plane; the remaining 8 lines of this kind are again gotten by rotating the figure through 72 degrees around the center of the pentagon.}
\end{figure}


\section{The 27 matrices}

The set $J=\s(3,2)$ of symmetric $3\times 3$ matrices with entries in $\GF(2)$
is a $6$-dimensional vector subspace of the algebra $\M(3,2)$ of all $3\times
3$ matrices with entries in $\GF(2)$. It is not closed w.r.t.\ multiplication,
so it is not a subalgebra. However, it is closed w.r.t.\ the operations
$A\mapsto A^{-1}$ (if $A\in J^*$, the set of invertible elements of $J$) and
$(A,B)\mapsto ABA$. Since, moreover, the identity matrix $\1$ belongs to $J$,
we have that $J$ is a \emph{Jordan-closed Jordan system} (see \cite{bh}).

Among the $64$ elements of $J=\s(3,2)$, there are $28$ invertible ones. These
fall into three classes:
\begin{itemize}
\item the identity matrix $\1$.
\item $15$ matrices $\ne \1$ with eigenvalue $1$. These are the matrices
    $D_i$ (see below). The matrices $D_1, D_2$ and $D_3$ are involutions
    (i.e.\ $D_i^2=\1$) and have a $2$-dimensional eigenspace, the others
    have a $1$-dimensional eigenspace. We call the set of these matrices
    $\bD$.
\item $12=6+6$ matrices without eigenvalues. These fall into two classes
    $\bU$ and $\bV$ of six matrices each (the matrices $U_i$ and $V_i$,
    respectively). Both subsets $\bU\cup\{\1,0\}$ and $\bV\cup\{\1,0\}$ of $\s(3,2)$, where $0$ is  the zero matrix, are fields isomorphic
    to $\GF(8)$.
\end{itemize}
$$
D_1=\begin{pmatrix}0&0&1\\0&1&0\\1&0&0\end{pmatrix},\quad
D_2=\begin{pmatrix}1&0&0\\0&0&1\\0&1&0\end{pmatrix},\quad
D_3=\begin{pmatrix}0&1&0\\1&0&0\\0&0&1\end{pmatrix},$$
$$
D_4=\begin{pmatrix}0&0&1\\0&1&0\\1&0&1\end{pmatrix},\quad
D_5=\begin{pmatrix}0&1&1\\1&0&1\\1&1&1\end{pmatrix},\quad
D_6=\begin{pmatrix}0&1&1\\1&1&1\\1&1&0\end{pmatrix},$$
$$
D_7=\begin{pmatrix}0&1&0\\1&1&0\\0&0&1\end{pmatrix},\quad
D_8=\begin{pmatrix}1&0&0\\0&0&1\\0&1&1\end{pmatrix},\quad
D_9=\begin{pmatrix}1&0&0\\0&1&1\\0&1&0\end{pmatrix},$$
$$
D_{10}=\begin{pmatrix}1&0&1\\0&1&0\\1&0&0\end{pmatrix},\quad
D_{11}=\begin{pmatrix}1&0&1\\0&1&1\\1&1&1\end{pmatrix},\quad
D_{12}=\begin{pmatrix}1&1&0\\1&0&0\\0&0&1\end{pmatrix},
$$
$$
D_{13}=\begin{pmatrix}1&1&0\\1&1&1\\0&1&1\end{pmatrix},\quad
D_{14}=\begin{pmatrix}1&1&1\\1&0&1\\1&1&0\end{pmatrix},\quad
D_{15}=\begin{pmatrix}1&1&1\\1&1&0\\1&0&1\end{pmatrix},
$$
$$
U_{1}=\begin{pmatrix}1&1&1\\1&1&0\\1&0&0\end{pmatrix},\quad
U_{2}=\begin{pmatrix}1&0&1\\0&0&1\\1&1&1\end{pmatrix},\quad
U_{3}=\begin{pmatrix}0&1&1\\1&0&0\\1&0&1\end{pmatrix},
$$
$$
U_{4}=\begin{pmatrix}0&0&1\\0&1&1\\1&1&0\end{pmatrix},\quad
U_{5}=\begin{pmatrix}0&1&0\\1&1&1\\0&1&1\end{pmatrix},\quad
U_{6}=\begin{pmatrix}1&1&0\\1&0&1\\0&1&0\end{pmatrix}.
$$
$$
V_{1}=\begin{pmatrix}0&1&0\\1&0&1\\0&1&1\end{pmatrix},\quad
V_{2}=\begin{pmatrix}0&1&1\\1&1&0\\1&0&0\end{pmatrix},\quad
V_{3}=\begin{pmatrix}1&1&0\\1&1&1\\0&1&0\end{pmatrix},
$$
$$
V_{4}=\begin{pmatrix}1&1&1\\1&0&0\\1&0&1\end{pmatrix},\quad
V_{5}=\begin{pmatrix}1&0&1\\0&0&1\\1&1&0\end{pmatrix},\quad
V_{6}=\begin{pmatrix}0&0&1\\0&1&1\\1&1&1\end{pmatrix}.
$$
We are interested in the set $\bS:=J^*\setminus\{\1\}=\bD\cup\bU\cup\bV$. Our
aim is to introduce the structure of $\GQ(2,4)$ on $\bS$ such that $\bD$
corresponds to a copy of the doily and $\bU,\bV$ correspond to the associated double-six.

We mention the following geometric facts: All matrices of
$\bS\subseteq\GL(3,2)$ act on the projective plane $\PG(2,2)$ (the Fano plane)
as collineations. The fixed points of such a collineation are exactly the
$1$-dimensional eigenspaces of the associated matrix. This implies that the
collineations induced by $D_1,D_2,D_3$ fix one line pointwise (and hence are
axial collineations), the collineations induced by the other elements of $\bD$
fix exactly one point, and the collineations induced by $\bU\cup\bV$ are fixed
point free. The sets $G=\bU\cup\{\1\}$ and $H=\bV\cup\{\1\}$ are groups
isomorphic to the multiplicative group of $\GF(8)$ (and hence isomorphic to the
cyclic group of order $7$). They induce on $\PG(2,2)$ so-called Singer cycles
(compare \cite[II.10]{HP}).
\section{Two quadratic forms}

The vector space $J=\s(3,2)$ over $F=\GF(2)$ is $6$-dimensional. We identify
$J\setminus\{0\}$ with the projective space $\PG(5,2)$ by identifying the
$1$-dimensional subspace $FX=\{0,X\}$ with $X\in J\setminus\{0\}$. In order to
show that the $27$-element set $\bS=J^*\setminus\{\1\}$ from above can be
viewed as the point set of $\GQ(2,4)$ we are going to construct a suitable
quadratic form on~$J$.

We know that a matrix $X \in J$ belongs to $J^*$ if, and only if, $\det X=1$.
The mapping $\det : J\to F$ is a cubic form. Our first step now is to introduce
new coordinates on $J$ such that $\det$ can be seen as a quadratic form.
For this purpose, we map the matrix
\begin{equation*}
X= \begin{pmatrix}a&b&c\\b&d&e\\c&e&f\end{pmatrix}\in J
\end{equation*}
to the vector
\begin{equation}\label{alpha}\alpha(X)=(a,e^2+df,d, c^2+af, f, b^2+ad)=(a, e+df,d,c+af,f,b+ad)\in V=F^6.
\end{equation}
Note that here we used that $x^2=x$ is valid for all $x\in F$. One can easily
check that $\alpha$ is a bijection (but of course not a linear one).
On $V$ we have the quadratic form $q_0$ of index $3$ given by
\begin{equation*}
q_0(x)=q(x_1,x_2,x_3,x_4,x_5,x_6) = x_1x_2+x_3x_4+x_5x_6.
\end{equation*}
A direct computation, using that $F$ has characteristic $2$, yields that for
each matrix $X\in J$ we have \beq \label{qdet} \det X=q_0(\alpha(X)). \eeq Let
$ \langle .\vert .\rangle_0$ be the symmetric bilinear form associated to
$q_0$, i.\,e., for $x,y\in V$ we have
\begin{equation*}
\langle x\vert y\rangle_0=q_0(x+y)+q_0(x)+q_0(y).
\end{equation*}
Then for all $X,Y\in J$ the following holds:
\beq \label{bilindet}
\langle\alpha(X)\vert\alpha(Y)\rangle_0=\det(X+Y)+\det X+\det Y.
\eeq
Note that
this is not completely trivial because $\alpha$ is not linear.
Equations (\ref{qdet}) and (\ref{bilindet}) allow us to identify $X$ with
$\alpha(X)$ (and so $J$ with $V=F^6$) and $q_0$ with $\det$. In particular, the
identity matrix $\1$ corresponds to the vector $(1,1,1,1,1,1)$.
The quadric $\cQ_0$ given by $q_0$ is the Klein quadric in $\PG(5,2)$, which is
in $1-1$ correspondence with the set of lines of $\PG(3,2)$, and
hence consists of $35$ elements. This is clear also because there are $35$
non-zero non-invertible matrices in $J=\s(3,2)$.

Our set $\bS$, together with the identity matrix $\1$, is the (set-theoretic)
complement of $\cQ_0$ in $\PG(5,2)$. In order to find a bijection from $\bS$ to
a quadric of projective index $1$ in $\PG(5,2)$ we consider another quadratic
form $q$ on $V=J$, given by
\begin{equation*}
q(X)=q_0(X) +(\langle X\vert\1\rangle_0)^2=q_0(X) +\langle X\vert\1\rangle_0.
\end{equation*}
One can easily check that the associated bilinear form $ \langle .\vert
.\rangle$ coincides with $ \langle .\vert .\rangle_0$. Moreover, using
(\ref{qdet}), (\ref{bilindet}) and identifying $X$ with $\alpha(X)$, we get
\beq\label{qdet2}
q(X)=\det X+\det (X+\1)+\det X+\det \1=\det(X+\1)+1.
\eeq
From this equation  it is obvious that the quadric $\cQ$ given by $q$ consists exactly of the points $X$ of $\PG(5,2)$ with $X+\1\in \bS$ (recall that $X= 0 $ is not a point of $\PG(5,2)$).
So it is reasonable to consider the mapping
\begin{equation*} \pi: X\mapsto X+\1,
\end{equation*}
which can be seen as a translation on the affine space $V=J$. However, we want to
interpret $\pi$ projectively. In $\PG(5,2)$, the mapping $\pi$ is not defined
on the point $\1$, because $\pi(\1)=0$; on the other points $\pi$ is a
bijection. For each point $X\ne \1$ the image $\pi(X)$ is the unique third
point on the line joining $X$ and $\1$.

As mentioned above, we have that $\pi(\bS)=\cQ$. So the quadric $\cQ$ consists
of $27$ points. Thus $\cQ$ must be a quadric of projective index $1$ and hence
gives rise to a generalized quadrangle $\GQ(\cQ)\cong\GQ(2,4)$ as described in
Section 2.
This leads us to the following definition:
\begin{defi}
The incidence structure $\GQ(\bS)$ is given by the point set $\bS$ and the line
set $\cL(\bS)$ consisting of the $3$-element subsets $\pi^{-1}(L)$, where
$L\subseteq\cQ$ is a line entirely contained in $\cQ$ (i.e., a line of the
generalized quadrangle $\GQ(\cQ)$ on $\cQ$).
Then clearly $\GQ(\bS)$ is a generalized quadrangle isomorphic to $\GQ(\cQ)$
via $\pi$, i.e., $\GQ(\bS)$ is a model of $\GQ(2,4)$.
For $X,Y\in\bS$ we write $X\sim Y$ iff $X,Y$ are collinear in $\GQ(\bS)$, i.e.,
iff $\pi(X),\pi(Y)$ are collinear in $\GQ(\cQ)$.
\end{defi}
We mention here that similarly to  the quadratic form $q$, which is defined with the help of  the distinguished matrix $\1$, one could define a  quadratic form $q_M$ for each matrix $M\in \bS$ by $q_M(X)=q_0(X) +\langle X\vert M\rangle_0=\det(X+M)+1$. Thus we get $27$ more quadrics of projective index $1$, all of which can be mapped bijectively onto $\bS$ by a mapping $\pi_M:X\mapsto X+M$. However, we will restrict ourselves to $q$ and $\pi$ as above.

The following holds:
\begin{prop}\label{collinear}
Let $X,Y\in\bS$. Then $X\sim Y \iff \langle X+\1\vert Y+\1\rangle=0$.
In particular:
\begin{enumerate}
\item[\rm(a)] If $X,Y\in\bU\cup\bV$, then $X\sim Y \iff \det (X+Y)=0$.
\item[\rm(b)] If $X,Y\in\bD$, then $X\sim Y\iff \det(X+Y)=0$.
\item[\rm(c)] If $X\in\bU\cup\bV, Y\in\bD$, then $X\sim Y\iff \det(X+Y)=1$.
\end{enumerate}
\end{prop}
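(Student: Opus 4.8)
The plan is to deduce everything from the already-established facts that $\pi(X)=X+\1$ is an isomorphism $\GQ(\bS)\to\GQ(\cQ)$ and that on a quadric two points are collinear in the associated GQ exactly when they are perpendicular with respect to the associated bilinear form. Concretely: two distinct points $P,Q$ of $\cQ$ lie on a common line of $\GQ(\cQ)$ if and only if the whole line $PQ$ lies on $\cQ$, and since $P,Q\in\cQ$ already, the third point $P+Q$ of that line lies on $\cQ$ iff $q(P+Q)=0$, i.e. iff $q(P)+q(Q)+\langle P\vert Q\rangle=0$, i.e. iff $\langle P\vert Q\rangle=0$. Applying this with $P=\pi(X)=X+\1$ and $Q=\pi(Y)=Y+\1$ gives the main equivalence $X\sim Y\iff\langle X+\1\vert Y+\1\rangle=0$ immediately. (One should note in passing that $X\ne Y$ forces $X+\1\ne Y+\1$, so the two points really are distinct.)

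For the refinements (a)--(c) I would expand the bilinear form using its defining relation with $q$ (equivalently with $q_0$, since $\langle\cdot\vert\cdot\rangle=\langle\cdot\vert\cdot\rangle_0$) together with (\ref{bilindet}). Bilinearity gives
\begin{equation*}
\langle X+\1\vert Y+\1\rangle_0=\langle X\vert Y\rangle_0+\langle X\vert\1\rangle_0+\langle\1\vert Y\rangle_0+\langle\1\vert\1\rangle_0,
\end{equation*}
and by (\ref{bilindet}) each term is a sum of determinants:
$\langle X\vert Y\rangle_0=\det(X+Y)+\det X+\det Y$, and $\langle X\vert\1\rangle_0=\det(X+\1)+\det X+1$, similarly for $Y$, while $\langle\1\vert\1\rangle_0=\det(0)+1+1=0$. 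Since $X,Y\in\bS$ means $\det X=\det Y=1$, these collapse to
\begin{equation*}
\langle X+\1\vert Y+\1\rangle_0=\det(X+Y)+\det(X+\1)+\det(Y+\1).
\end{equation*}
Now the three cases are distinguished purely by the values of $\det(X+\1)$ and $\det(Y+\1)$: by (\ref{qdet2}), $\det(Z+\1)=q(Z)+1$, and $q$ vanishes precisely on the $27$ points $Z$ with $Z+\1\in\bS$, i.e. precisely on $\cQ=\pi(\bS)$. So $\det(X+\1)=0$ when $X+\1\in\bS$ and $=1$ otherwise. The key arithmetic input I need is therefore: for $X\in\bU\cup\bV$ one has $X+\1\in\bS$ (so $\det(X+\1)=0$), and for $X\in\bD$ one has $X+\1\notin\bS$, in fact $X+\1$ non-invertible (so $\det(X+\1)=1$). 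Granting this, in case (a) both correction terms vanish and the equivalence becomes $\langle\cdot\vert\cdot\rangle_0=\det(X+Y)=0$, i.e. $X\sim Y\iff\det(X+Y)=0$; in case (b) both correction terms equal $1$ and sum to $0$ over $\GF(2)$, again giving $X\sim Y\iff\det(X+Y)=0$; in the mixed case (c) exactly one correction term is $1$, so $\langle\cdot\vert\cdot\rangle_0=\det(X+Y)+1$, hence $X\sim Y\iff\det(X+Y)=1$.

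The one genuinely non-formal step — the main obstacle — is verifying the claim that $X+\1$ is invertible for every $X\in\bU\cup\bV$ and non-invertible for every $X\in\bD$, equivalently that $\1$ is an eigenvalue of $X$ precisely for $X\in\bD$ and not for $X\in\bU\cup\bV$. But this is exactly the content of the classification of $\bS$ recalled in Section~3: the $15$ matrices of $\bD$ are by definition those non-identity invertible symmetric matrices having eigenvalue $1$, while the $12$ matrices of $\bU\cup\bV$ are by definition those with no eigenvalues at all (consistent with $\bU\cup\{\1,0\}$ and $\bV\cup\{\1,0\}$ being copies of $\GF(8)$, on which $\1$ is the multiplicative identity, not a root of the characteristic polynomial of any other element). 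Having $\1$ as eigenvalue of $X$ is equivalent to $\det(X-\1)=\det(X+\1)=0$, so the needed determinant values are immediate. Alternatively one can read them off the tables $D_i$, $U_i$, $V_i$ by direct inspection, which makes the proposition a finite verification; I would phrase it conceptually via eigenvalues and cite Section~3, relegating the table check to a remark.
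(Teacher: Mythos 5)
Your overall route is the paper's own: reduce collinearity on $\cQ$ to vanishing of the bilinear form, rewrite $\langle X+\1\vert Y+\1\rangle_0$ as $\det(X+Y)+\det(X+\1)+\det(Y+\1)$, and then split into cases according to the values of $\det(X+\1)$ and $\det(Y+\1)$. However, your key arithmetic input is stated backwards and is internally inconsistent: for $X\in\bU\cup\bV$ you write ``$X+\1\in\bS$ (so $\det(X+\1)=0$)'' and for $X\in\bD$ you write ``$X+\1$ non-invertible (so $\det(X+\1)=1$)''. Membership in $\bS$ means invertible, hence $\det(X+\1)=1$; non-invertible means $\det(X+\1)=0$. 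The correct values are $\det(X+\1)=1$ for $X\in\bU\cup\bV$ (no eigenvalue $1$) and $\det(X+\1)=0$ for $X\in\bD$ (eigenvalue $1$) --- which is what your own final paragraph says, contradicting the middle one. Your conclusions (a)--(c) survive only because the case analysis depends solely on whether the two correction terms agree or differ, a criterion invariant under swapping $0\leftrightarrow 1$; the paper phrases the final step exactly in this swap-invariant way ($\det(X+\1)=\det(Y+\1)$ in cases (a),(b) and $\det(X+\1)\ne\det(Y+\1)$ in case (c)), and you would do well to adopt that phrasing rather than committing to the (mis-stated) individual values.

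A second, subtler issue: your derivation of the identity $\langle X+\1\vert Y+\1\rangle_0=\det(X+Y)+\det(X+\1)+\det(Y+\1)$ via the four-term bilinear expansion glosses over the fact that the identification of $J$ with $V$ is the non-linear map $\alpha$, and $\alpha(X+\1)\ne\alpha(X)+\alpha(\1)$ in general (the paper explicitly warns that (\ref{bilindet}) ``is not completely trivial because $\alpha$ is not linear''). Bilinearity justifies the expansion only for the vectors $\alpha(X)+\alpha(\1)$ and $\alpha(Y)+\alpha(\1)$, whereas the points of $\cQ$ are $\alpha(X+\1)$ and $\alpha(Y+\1)$; the two resulting expressions do coincide, but that coincidence is itself a consequence of (\ref{bilindet}) applied both ways, not of bilinearity alone. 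The paper sidesteps this entirely by applying (\ref{bilindet}) once, directly to the pair of matrices $X+\1$ and $Y+\1$, which yields the identity in one line; I recommend you do the same. Your justification of ``collinear iff perpendicular'' via the third point $P+Q$ of the line is fine (the paper asserts this equivalence without proof), as is the reduction of the determinant values to the eigenvalue classification of Section~3.
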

\begin{proof}
By definition, $X\sim Y$ iff $\pi(X)=X+\1$ and $\pi(Y)=Y+\1$ are collinear in
$\GQ(\cQ)$. This is equivalent to $0=\langle X+\1,Y+\1\rangle=\langle
X+\1,Y+\1\rangle_0$.
Using (\ref{bilindet}), we compute $0=\langle
X+\1,Y+\1\rangle_0=\det(X+\1+Y+\1)+{\det(X+\1)}+\det(Y+\1)=\det(X+Y)+\det(X+\1)+\det(Y+\1)$.
This implies the statements of (a), (b), (c) because in cases (a) and (b) we
have $\det(X+\1)=\det(Y+\1)$ while in case (c) we have
$\det(X+\1)\ne\det(Y+\1)$.
\end{proof}
\begin{prop}
\begin{enumerate}
\item[\rm(a)] For the subsets $\bD$, $\bV$ and $\bU$ of $\bS$, the following
    holds:
\begin{enumerate}
\item[\rm(i)] $\pi(\bU)=\bU$, $\pi(\bV)=\bV$, $\pi(\bD)\cap\bS=\emptyset$,
\item[\rm(ii)] $\bS\cap\cQ=\bU\cup\bV$, $\cQ_0\cap\cQ=\pi(\bD)$,
\end{enumerate}
\item[\rm(b)] Let $\1^\perp(=\1^{\perp_0})$ be the hyperplane of points of
    $\PG(5,2)$ perpendicular to $\1$ w.r.t.\ $\langle .\vert .\rangle(=
    \langle .\vert .\rangle_0)$, i.e., the tangent hyperplane through $\1$
    of $\cQ$ and of $\cQ_0$. Then:
\begin{enumerate}
\item[\rm(i)] $\1^\perp=\{\1\}\cup\bD\cup\pi(\bD)$,
\item[\rm(ii)] The tangents through $\1$ of $\cQ$ and of $\cQ_0$ are the lines
    $\{\1,X,\pi(X)\}$ with $X\in\bD$.
\item[\rm(iii)] The quadric $\cQ\cap\1^\perp=\cQ_0\cap\1^\perp$ equals
    $\pi(\bD)$. So $\pi(\bD)$ is a quadric of projective index $1$ in
    the $4$-dimensional projective space $\1^\perp$ and gives rise to a
    subquadrangle $\GQ(\bD)$ isomorphic to $\GQ(2,2)$.
\end{enumerate}
\end{enumerate}
\end{prop}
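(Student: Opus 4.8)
The plan is to establish, once and for all, a dictionary translating membership in $\bD$, $\bU$, $\bV$ into determinant conditions, and then to read off every assertion from the partition of the $63$ points of $\PG(5,2)$ into the six pairwise disjoint blocks $\{\1\}$, $\bD$, $\bU$, $\bV$, $\pi(\bD)$ and $\cQ_0\setminus\pi(\bD)$ (that this is a partition follows from $\PG(5,2)=J^*\sqcup\cQ_0$, $J^*=\{\1\}\cup\bD\cup\bU\cup\bV$, and $\pi(\bD)\subseteq\cQ_0$). Dictionary: for $X\in\bS$, ``$1$ is an eigenvalue of $X$'' amounts to ``$X+\1$ singular'', so $X\in\bD\iff\det(X+\1)=0$; and since an invertible matrix over $\GF(2)$ with no eigenvalue is exactly one with no eigenvalue $1$, $X\in\bU\cup\bV\iff\det(X+\1)=1\iff X+\1\in J^*$. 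As $D\ne\1$ forces $D+\1\ne0$, the first equivalence gives $\pi(\bD)\subseteq\cQ_0$, so $\pi(\bD)\cap\bS=\emptyset$; and $\pi(\bU)=\bU$, $\pi(\bV)=\bV$ because inside the subfield $\bU\cup\{0,\1\}\cong\GF(8)$ the additive bijection $x\mapsto x+\1$ only swaps $0\leftrightarrow\1$ and permutes the remaining six elements ($=\bU$) among themselves, and likewise for $\bV$; this settles (a)(i). By (\ref{qdet2}), $\cQ=\{X:X+\1\in\bS\}=\pi(\bD)\cup\pi(\bU)\cup\pi(\bV)=\pi(\bD)\cup\bU\cup\bV$, so intersecting with $\bS=\bD\cup\bU\cup\bV$ gives $\bU\cup\bV$ and intersecting with $\cQ_0$ (disjoint from $J^*$, hence from $\bU\cup\bV$, but containing $\pi(\bD)$) gives $\pi(\bD)$, which is (a)(ii).

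For (b)(i), equation (\ref{bilindet}) with $Y=\1$ gives $\langle X\vert\1\rangle_0=\det(X+\1)+\det X+1$, so $X\in\1^\perp$ iff exactly one of $X$, $X+\1$ lies in $J^*$. Testing this on the six blocks: it holds on $\{\1\}$ ($X\in J^*$, $X+\1=0\notin J^*$), on $\bD$ ($X\in J^*$, $X+\1\in\cQ_0$) and on $\pi(\bD)$ ($X\in\cQ_0$, $X+\1\in\bD\subseteq J^*$); it fails on $\bU\cup\bV$ (both $X$ and $\pi(X)$ invertible) and on $\cQ_0\setminus\pi(\bD)$ (there $X+\1$ cannot be invertible, by the dictionary together with $\pi(\bU\cup\bV)=\bU\cup\bV$). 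Hence $\1^\perp=\{\1\}\cup\bD\cup\pi(\bD)$, consistent with $1+15+15=31$.

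For (b)(ii) I would isolate a lemma about quadratic forms over $\GF(2)$: if $q$ is nondegenerate and $q(p)=1$, then the three points of a line through $p$ are $p$, $v$, $p+v$ with $q(p+v)=1+\langle p\vert v\rangle+q(v)$; so if $v\in p^\perp$ exactly one of $v$, $p+v$ lies on the quadric (the conditions $q(v)=0$, $q(v)=1$ being complementary), making the line tangent, whereas if $v\notin p^\perp$ both or neither does, making it secant or external. Thus the tangents through $\1$ are exactly the lines through $\1$ contained in $\1^\perp$, and this applies identically to $\cQ$ and $\cQ_0$ since $q(\1)=q_0(\1)=1$ and $\langle\cdot\vert\cdot\rangle=\langle\cdot\vert\cdot\rangle_0$ forces $\1^\perp=\1^{\perp_0}$. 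Each line through $\1$ is $\{\1,X,\pi(X)\}$ for a unique non-identity $X$, and lies in $\1^\perp$ exactly when $X\in\1^\perp\setminus\{\1\}=\bD\cup\pi(\bD)$; relabelling $X\leftrightarrow\pi(X)$ if necessary, these are exactly the lines $\{\1,X,\pi(X)\}$ with $X\in\bD$, which is (b)(ii).

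Finally, for (b)(iii): the blocks $\{\1\},\bD,\pi(\bD),\bU,\bV$ being pairwise disjoint and the $20$ points of $\cQ_0\setminus\pi(\bD)$ avoiding $\1^\perp$ by (b)(i), one gets $\cQ\cap\1^\perp=(\pi(\bD)\cup\bU\cup\bV)\cap\1^\perp=\pi(\bD)$ and $\cQ_0\cap\1^\perp=\pi(\bD)$. On $\1^\perp$ (a $\PG(4,2)$) the forms $q$ and $q_0$ coincide, their difference $\langle\cdot\vert\1\rangle_0$ vanishing there, and the induced quadratic form is nondegenerate: the radical of the induced bilinear form is $\1^\perp\cap\langle\1\rangle=\langle\1\rangle$, on which $q_0(\1)=\det\1=1\ne0$, so the quadratic radical is trivial. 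Thus $\pi(\bD)$ is (projectively) the nondegenerate quadric $Q(4,2)$ of $\PG(4,2)$, which has projective index $1$ and, by Section~2, carries the structure of the unique $\GQ(2,2)$; its $15$ points match $|\pi(\bD)|$. Transporting along $\pi$ as in the Definition identifies $\GQ(\bD)$ with $\GQ(\pi(\bD))\cong\GQ(2,2)$, and this is a subquadrangle of $\GQ(\bS)$ because $\bD\subseteq\bS$ and every line of $\GQ(\bD)$ is a line of $\GQ(\bS)$, being a $\PG(5,2)$-line contained in $\pi(\bD)\subseteq\cQ$. The one step needing genuine thought rather than bookkeeping is the tangent-line lemma in (b)(ii): over $\GF(2)$ tangency is not read off a discriminant and must be extracted from the explicit three-point computation; the rest is chasing the six-block partition of $\PG(5,2)$.
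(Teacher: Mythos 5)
Your proof is correct and follows essentially the same route as the paper: the determinant dictionary for $\bD$ versus $\bU\cup\bV$, the field structure of $\bU\cup\{0,\1\}$ for $\pi(\bU)=\bU$, and the computation $\langle X\vert\1\rangle_0=\det(X+\1)+\det X+1$ for part (b) are exactly the paper's arguments. You merely supply details the paper compresses into ``(ii) and (iii) follow from (i) and (a)'', namely the tangent-line lemma over $\GF(2)$ and the nondegeneracy of the form restricted to $\1^\perp$; both of these added steps are sound.
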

\begin{proof}
(a) (i): The statements on $\bU$ and $\bV$ can be checked easily. (They follow
also from the fact that $\bU\cup\{\1,0\}$ and $\bV\cup\{\1,0\}$ are fields of characteristic $2$.)
The statement on $\bD$ is clear because the elements of $\bD$ have eigenvalue
$1$.

(ii): The first equation follows from (i) because $\cQ=\pi(\bS)$. The second
equation follows from the fact that $\bD$ consists exactly of the elements of
$\bS$ that have eigenvalue $1$.

(b) (i) $X\in\1^\perp\iff 0=\langle X,\1\rangle=\langle
X,\1\rangle_0=\det(X+\1)+\det X+1\iff \det (X+\1)\ne \det X$. So
$\1\in\1^\perp$, and for $X\ne\1$ we get $X\in\1^\perp\iff X\in\bD$ or
$X+\1\in\bD$.

(ii) and (iii) follow from (i) and (a).
\end{proof}

\section{Another projective representation}

Again we consider the projective space $\PG(5,2)$. But now we study the planes
of this space, i.\,e., the $3$-dimensional vector subspaces of $V=F^6$. Each such
plane can be described as the row space of a $3\times 6$ matrix, i.\,e., a matrix
$(A\vert B)$, with $A,B\in\M(3,2)$, of rank $3$. In what follows, we always
identify $(A\vert B)$ with its row space.

Let
\begin{equation*}
\cS:=\{(X\vert\1)\mid X\in\bS\}.
\end{equation*}
Then of course each $3\times 6$ matrix in $\cS$ has rank $3$ and thus will be
considered as a plane of $\PG(5,2)$. The bijection $\bS\to \cS: X\mapsto
(X\vert\1)$ gives us another model of $\GQ(2,4)$, where the points are planes
in $\PG(5,2)$. We call this model $\GQ(\cS)$.
Note that for $X\in \bS$  the coordinate vector $\alpha(X)$ defined in  (\ref{alpha}) consists exactly of the six Pl\"ucker coordinates (i.e.\ the $3\times 3$-minors) of $(X\vert\1)$ that have multiplicity one.

We study the planes of $\cS$ more in detail. It is shown in \cite{wan} (see
also \cite{bhavl}, \cite{bh}) that all these planes are totally isotropic
w.r.t.\ a symplectic polarity of $\PG(5,2)$. Moreover, they are all skew to the
plane $(\1\vert 0)$ and to the plane $(0\vert\1)$, as two planes $(A\vert B)$
and $(C\vert D)$ are skew iff the $6\times 6$ matrix
$\begin{pmatrix}A&B\\C&D\end{pmatrix}$ is invertible.
For two planes $(X\vert \1)$ and $(Y\vert \1)$ (with $X,Y\in\M(3,2)$ arbitrary)
we have \beq\label{rank} \rk(X-Y) =k\iff \dim ((X\vert \1)\cap (Y\vert \1))=3-k
\eeq (see \cite[Prop.\ (3.30)]{wan}; here $\dim$ means the vector space
dimension). Note that in (\ref{rank}) we might also write $X+Y$ instead of
$X-Y$ since the ground field is $F=\GF(2)$.
Now we study the following subsets of $\cS$:
\begin{equation*}\cD:=\{(X\vert\1)\mid X\in\bD\},\, \cU:=\{(X\vert\1)\mid X\in\bU\},\,
\cV:=\{(X\vert\1)\mid X\in\bV\}.
\end{equation*}
\begin{rem}\label{spreads}
The sets $\cU':=\{(\1\vert 0),(0\vert\1), (\1\vert\1)\}\cup\cU$ and
$\cV':=\{(\1\vert 0),(0\vert\1), (\1\vert\1)\}\cup\cV$ are \emph{spreads}, i.\,e.
partitions of the point set of $\PG(5,2)$ into planes.
\end{rem}
\begin{proof}
By (\ref{rank}), any two different planes of $\cU'$ (or $\cV'$, respectively)
are skew. Since $\PG(5,2)$ altogether has $63$ points and each plane has $7$
points, the assertion follows.
\end{proof}
It is easy to see that the intersection of the planes $(X\vert \1)$ and
$(\1\vert \1)$ consists exactly of the points $F(x_1,x_2,x_3,x_1,x_2,x_3)$
where $x=(x_1,x_2,x_3)$ is an eigenvector of $X$. (Note that we let the
matrices act from the right, so the eigenvectors are rows and not columns.)
This implies:
\begin{rem}\label{meet}
For each $D_i\in\bD$ the plane $(D_i\vert\1)\in\cD$ meets the distinguished
plane $ (\1\vert \1)$ either in a point or in a line (and this latter case
appears exactly if $i=1,2,3$).
\end{rem}
By Rem.\ \ref{spreads} the plane $(X\vert\1)\in\cU\cup\cV$ is always skew to $
(\1\vert \1)$.
Next we want to find out how the planes $(U\vert \1)\in\cU$ intersect the planes
$(X\vert\1)\in\cD\cup\cV$. For this we need a lemma on an action of the cyclic
group $G=\{\1\}\cup\bU$:
\begin{lem}\label{lemmagroup}
The mapping $\rho: U\mapsto \rho_U:X\mapsto UXU$ ($U\in G$) is an action of $G$
on the $21$-element set $\bD\cup \bV$. There are three orbits of this action,
each orbit contains $2$ elements of $\bV$ and $5$ elements of $\bD$, among
these exactly one of the involutions $D_1, D_2, D_3$. (So $D_1, D_2, D_3$ can
be chosen as representatives of the orbits.)
The same holds for the action of $H=\{\1\}\cup\bV$ on $\bD\cup \bU$.
\end{lem}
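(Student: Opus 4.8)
The plan is to verify the group-action claims partly by direct structural reasoning and partly by a short computation using the matrices listed in Section 3. First I would check that $\rho$ is indeed an action: since $G=\{\1\}\cup\bU$ is a group under matrix multiplication (it is isomorphic to the multiplicative group of $\GF(8)$, as stated in Section 3), and since for $U\in\GL(3,2)$ the map $X\mapsto UXU^{\mathsf T}=UXU$ (the last equality because $U\in J$ is symmetric, so $U^{\mathsf T}=U$) sends symmetric matrices to symmetric matrices and preserves invertibility and the property of having no eigenvalue $1$ (note $\det(UXU-\1)=\det U\cdot\det(X-U^{-2})\cdot\det U=\det(X-U^{-2})$, and $U^{-2}\in G\setminus\{\1\}$ never equals $\1$, hmm — more cleanly: $UXU$ has eigenvalue $1$ iff $X$ has eigenvalue $(\det U)^{-2}=1$... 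I will instead argue that congruence by $U$ preserves the partition $\bD,\bU,\bV$ because it fixes $\1$ and, being an automorphism of the Jordan system, permutes each of the three orbits of the relevant group). The key point is that $\rho_U\rho_W=\rho_{UW}$, which is immediate: $(UW)X(UW)=U(WXW)U$ since we are in characteristic $2$ and the matrices are symmetric, so $(UW)^{\mathsf T}=WU$... this is the one subtle spot, see below.

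Once $\rho$ is an action on some subset, I would identify the invariant set. Congruence $X\mapsto UXU$ fixes $\1$ (as $U^2=\1$? no — rather as follows: the fixed matrix of the whole Jordan system under which $\bU$ is a field is $\1$), and it maps $\bU$ to itself because $\bU\cup\{\1,0\}$ is a field and left/right multiplication by a nonzero field element is a bijection of the field fixing $0$ and permuting $\bU\cup\{\1\}$, while a short check shows $\1$ is fixed; hence $\bD\cup\bV$ is invariant, a $15+6=21$-element set. The orbit structure is then forced by counting: $|G|=7$ is prime, so every orbit has size $1$ or $7$; there are no fixed points in $\bD\cup\bV$ since a nonidentity Singer-type element $U$ of $\GL(3,2)$ has $UXU=X\iff XU=UX$ (using $U^2\ne\1$ and symmetry), and the centralizer of such $U$ in $\GL(3,2)$ is the cyclic group it generates, i.e. lies in $G\cup\{0\}$, which meets $\bD\cup\bV$ trivially. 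Therefore $\bD\cup\bV$ splits into exactly $21/7=3$ orbits of size $7$.

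It remains to show each orbit contains exactly $2$ elements of $\bV$ and $5$ of $\bD$, with exactly one of $D_1,D_2,D_3$. For the split $2+5$: the action restricted to $\bV$ must be a union of orbits, but $|\bV|=6$ is not a multiple of $7$, so $\bV$ cannot be a union of $G$-orbits inside $\bD\cup\bV$ — wait, that only shows the three orbits are not each contained in $\bV$. I would instead compute directly: pick $U=U_1$ (a generator of $G$), compute its powers $U_1,U_1^2=\1\cdot?$... actually $U_1^2,\dots,U_1^7=\1$, and then compute the orbit of $V_1$ under $X\mapsto U_1^kX U_1^k$, $k=0,\dots,6$, reading off how many land in $\bV$ versus $\bD$; repeat for a representative in a second orbit. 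The global count ($3$ orbits, $6$ elements of $\bV$ total, $15$ of $\bD$ total) then pins every orbit to the pattern $2+5$ provided no orbit is $7\subseteq\bD$; but $7\nmid 6$ forces each orbit to meet $\bV$, and since $6=2\cdot 3$ the only possibility consistent with three orbits is $2$ each. Finally, $D_1,D_2,D_3$ are the unique elements of $\bD$ with $2$-dimensional eigenspace (stated in Section 3), a property visibly preserved by congruence, so they lie in distinct orbits or the same one; one quick computation (the orbit of $D_1$ under $U_1$) shows $D_2,D_3$ are not in it, hence they are spread across the three orbits, one apiece. The statement for $H$ on $\bD\cup\bU$ follows by the identical argument with the roles of $\bU$ and $\bV$ exchanged.

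\medskip
\textbf{Main obstacle.} The genuinely delicate point is that the congruence maps compose correctly \emph{and} that they are fixed-point-free on $\bD\cup\bV$. Composition: $(UW)X(UW)$ versus $U(WXW)U$ — these agree only if $UW=WU$ or if one uses symmetry carefully; the honest statement is that $\rho$ is an action because $G$ is abelian, so $UW=WU$ and $(UW)X(UW)=UWXWU=U(WXW)U$. Fixed-point-freeness: I expect this to need the fact that the only matrices commuting with a Singer cycle generator are polynomials in it, i.e. elements of the field $\GF(8)$ copy, none of which lie in $\bD\cup\bV$; establishing this cleanly (rather than by brute force over all $21$ matrices) is where the real content sits. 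Everything else is counting modulo $7$ plus a couple of explicit orbit computations with the matrices already tabulated.
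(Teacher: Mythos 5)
Your verification that $\rho$ is an action and that $\bD\cup\bV$ is invariant is essentially the paper's own argument: $UXU$ is symmetric and invertible, it cannot lie in $G$ because $G$ is a group and $X\notin G$, and $\rho_{UU'}=\rho_U\rho_{U'}$ because $G$ is abelian. For the orbit structure the paper simply invokes a direct computation, and the explicit computations you offer as a fallback would indeed settle everything. The problem is that several of the structural shortcuts you interleave are false and would fail if relied upon.

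First, $\rho_U$ does \emph{not} fix $\1$: $\rho_U(\1)=U^2\ne\1$ for $U\ne\1$, since $G$ is cyclic of order $7$ and has no involutions. (This does not hurt the invariance of $\bD\cup\bV$, which only needs the \emph{set} $G=\bU\cup\{\1\}$ to be invariant, but the claim as stated is wrong.) Second, and more seriously, $UXU=X$ is \emph{not} equivalent to $XU=UX$: multiplying by $U^{-1}$ gives $XU=U^{-1}X$, i.e.\ a fixed point $X$ would have to conjugate $U$ to $U^{-1}$, not centralize it, so your centralizer argument for fixed-point-freeness addresses the wrong equation. (One can still argue abstractly --- the normalizer of a Singer cycle in $\GL(3,2)$ conjugates $U$ only to $U,U^2,U^4$, never to $U^{-1}=U^6$ --- or one simply computes.) Third, the property of having a $2$-dimensional eigenspace for eigenvalue $1$ is \emph{not} preserved by congruence: $\det(UXU+\1)=\det(X+U^{-2})$, so congruence does not even preserve $\bD$ --- indeed the whole point of the lemma is that each orbit mixes $5$ elements of $\bD$ with $2$ of $\bV$. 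Hence the argument that exactly one of $D_1,D_2,D_3$ lies in each orbit does not work as you state it. Finally, the counting step (``$7\nmid 6$ forces each orbit to meet $\bV$, and $6=2\cdot3$ forces the split $2+2+2$'') is a non sequitur: a priori the six elements of $\bV$ could distribute over the three orbits as $(4,1,1)$, $(3,2,1)$, etc. All of these points are repaired by the explicit orbit computation you mention (and which is all the paper does), so the proposal is salvageable, but the ``clean'' arguments offered in place of that computation are not correct.
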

\begin{proof}
For $U\in G$ and $X\in \bD\cup \bV$, the matrix $UXU$ is again symmetric and of
course again invertible, so $UXU\in\bS\cup\{\1\}$. In addition, since $G$ is a
group and $X$ does not belong to $G$, we are sure that $UXU$ belongs to
$\bD\cup \bV$. As $G$ is commutative, we have $\rho_{UU'}=\rho_U\rho_{U'}$, so
$\rho$ is a group action. The statement on the orbits can be checked by a
direct computation.
\end{proof}
Now we extend the action of $G$ on $\bD\cup \bV$ to the plane set $\cD\cup
\cV$. Consider again $U\in G$. Then the $6\times 6$ matrix $\begin{pmatrix}
U&0\\0&U^{-1}\end{pmatrix}$ induces a collineation of $\PG(5,2)$. This
collineation maps planes to planes; in particular, for each matrix $X$ we have
\begin{equation*}
(X\vert\1)\mapsto (X\vert\1)\begin{pmatrix} U&0\\0&U^{-1}\end{pmatrix}= (XU\vert U^{-1})=(UXU\vert\1).
\end{equation*}
Moreover, we have that $(X\vert\1)$ meets $(\1\vert\1)$ in a line, if, and only
if, $(UXU\vert\1)$ meets $(U\1U\vert\1)=(U^2\vert \1)$ in a line.

Consider now an arbitrary $X\in \bD\cup\bV$. Then the orbit of $X$ under the
action of $G$ contains exactly one of the matrices $D_1,D_2,D_3$, say $D_1$,
i.\,e., $X=UD_1U$ for some $U\in G$. By Rem.~\ref{meet}, $(D_1\vert\1)$ meets
$(\1\vert\1)$ in a line. So $(X\vert\1)=(UD_1U\vert\1)$ meets $(U^2\vert\1)$ in
a line. Recall now that the set $\cU'$ defined in Rem.\ \ref{spreads} is a
spread of $\PG(5,2)$, so each of the $4$ remaining points of the plane
$(X\vert\1)$ (not in $(U^2\vert\1)$) must be contained in exactly one of the
planes $(Y\vert\1)$ with $Y\in G\setminus\{U^2\}$ (recall that $(X\vert\1)$ is
skew to $(\1\vert 0)$ and $(0\vert\1)$). Since no $3$ of the $4$ points are
collinear, we have that $4$ of these planes are needed, i.\,e., $4$ of these
planes meet $(X\vert\1)$ in exactly one point. By Rem.\ \ref{meet}, the plane
$(\1\vert\1)$ is among these $4$ planes if, and only if, $X=D_1$.
Altogether, we have the following result:
\begin{prop}\label{1.1}
Let $X\in\bD\cup\bV$. Then the plane $(X\vert \1)\in\cD\cup\cV$ meets
\begin{enumerate}
\item[\rm(a)] exactly $4$ planes of $\cU$ in a point and no plane of $\cU$ in
    a line, if $X\in\{D_1,D_2,D_3\}$,
\item[\rm(b)] exactly $3$ planes of $\cU$ in a point and exactly one plane of
    $\cU$ in a line, if $X\in\bD\setminus\{D_1,D_2,D_3\}$,
\item[\rm(c)] exactly $4$ planes of $\cU$ in a point and exactly one plane of
    $\cU$ in a line, if $X\in\bV$.
\end{enumerate}
Consequently, $(X\vert \1)$ is skew to exactly two planes of $\cU$, if
$X\in\bD$, and $(X\vert \1)$ is skew to exactly one plane of $\cU$, if
$X\in\bV$.
The same holds if the roles of $\bU$ and $\bV$ are interchanged.
\end{prop}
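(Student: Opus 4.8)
The plan is to count, for a fixed $X \in \bD \cup \bV$, the incidences between the plane $(X\vert\1)$ and the seven planes of the spread $\cU' = \{(\1\vert 0),(0\vert\1),(\1\vert\1)\} \cup \cU$. Since $\cU'$ partitions the $63$ points of $\PG(5,2)$ into seven planes of seven points each, and since $(X\vert\1)$ is one of these very planes only when $X=\1$ (which is excluded), the seven points of $(X\vert\1)$ must be distributed among the seven planes of $\cU'$ with $(X\vert\1)$ itself not being one of them. Now $(X\vert\1)$ is skew to $(\1\vert 0)$ and to $(0\vert\1)$ by the skewness criterion already recorded (the $6\times 6$ matrix built from $X,\1$ and from the relevant blocks is invertible because $X$ is), so these two planes contribute no points. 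Hence all seven points of $(X\vert\1)$ lie in the five planes $(\1\vert\1)$ and $(U_i\vert\1)$, $i=1,\dots,6$ — wait, that is five-plus, so let me recount: $\cU'$ has the three fixed planes plus the six planes of $\cU$, i.e.\ nine planes, of which two ($(\1\vert 0)$, $(0\vert\1)$) are skew to $(X\vert\1)$, leaving seven planes — namely $(\1\vert\1)$ together with the six $(U_i\vert\1)$ — among which the seven points of $(X\vert\1)$ are distributed, each point lying in exactly one.

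The key structural input is the rank/intersection dictionary \eqref{rank}: two planes $(X\vert\1)$ and $(Y\vert\1)$ meet in a subspace of dimension $3-\rk(X+Y)$, so they meet in a line iff $\rk(X+Y)=2$, in a single point iff $\rk(X+Y)=3$, i.e.\ iff $X+Y$ is invertible, and are skew iff $X+Y=0$ (impossible here for distinct $X,Y$) — no, $\rk(X+Y)$ ranges over $0,1,2,3$, and skewness is $\rk(X+Y)=3$ giving dimension $0$: a single common point, not skewness. Let me restate correctly: dimension of intersection is $3-\rk(X+Y)$, which is $0$ (skew, since a $0$-dimensional projective intersection is empty) when $\rk(X+Y)=3$, a point when $\rk(X+Y)=2$, a line when $\rk(X+Y)=1$. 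So $(X\vert\1)$ meets $(Y\vert\1)$ in a \emph{line} iff $X+Y$ has rank $1$, in a \emph{point} iff $X+Y$ has rank $2$, and they are \emph{skew} iff $X+Y$ is invertible. Thus I must determine, for each $X \in \bD\cup\bV$, how many $U_i$ give $\rk(X+U_i)=1$, how many give $\rk(X+U_i)=2$, and how many give $X+U_i$ invertible; and separately whether $(X\vert\1)$ meets $(\1\vert\1)$ in a line or a point, which is exactly Remark \ref{meet}.

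I would organize the argument using the orbit structure of Lemma \ref{lemmagroup} together with the collineation of $\PG(5,2)$ induced by $\begin{pmatrix}U&0\\0&U^{-1}\end{pmatrix}$ for $U\in G$, which sends $(X\vert\1)$ to $(UXU\vert\1)$ and permutes $\cU$ while fixing $(\1\vert\1)$ setwise only if $U^2 = \1$, i.e.\ not in general — rather it sends $(\1\vert\1)$ to $(U^2\vert\1)$. Hence it suffices to treat one representative per $G$-orbit on $\bD\cup\bV$. Pick $D_1$ (resp.\ one $V_j$): by Remark \ref{meet}, $(D_1\vert\1)$ meets $(\1\vert\1)$ in a line, so its remaining four points lie one each in four distinct planes of $\cU$ (four are needed, since no three of four points of a plane are collinear), giving case (a); the general $X \in \bD$ with one-dimensional eigenspace is $UD_1U$, whose plane meets $(U^2\vert\1)\in\cU$ in a line and whose four other points spread over four more planes of $\cU'$, one of which is $(\1\vert\1)$ iff $X=D_1$ — so for $X \in \bD\setminus\{D_1,D_2,D_3\}$ exactly three planes of $\cU$ are met in a point and one in a line, case (b). For $X\in\bV$, the plane $(X\vert\1)$ is skew to $(\1\vert\1)$ by Remark \ref{spreads}, so all seven of its points lie in the six planes of $\cU$; writing $X=UD_1U$ with the same collineation argument, $(X\vert\1)$ meets $(U^2\vert\1)\in\cU$ in a line (three points) and its remaining four points spread over four further planes of $\cU$, giving case (c). The stated corollary on skew planes is then immediate by subtraction: $6-(4+0)=2$ and $6-(3+1)=2$ for $X\in\bD$, and $6-(4+1)=1$ for $X\in\bV$. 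The symmetry statement follows by replacing $G$ with $H=\{\1\}\cup\bV$ and invoking the second half of Lemma \ref{lemmagroup}.

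The main obstacle — the one place a genuine check is unavoidable — is verifying the orbit claim of Lemma \ref{lemmagroup} is being used correctly, namely that when $X\in\bV$ and $X=UD_1U$, the special plane $(U^2\vert\1)$ that $(X\vert\1)$ meets in a line indeed belongs to $\cU$ (not to $\{(\1\vert 0),(0\vert\1),(\1\vert\1)\}$), equivalently $U^2 \ne \1$; but $U\in G\setminus\{\1\}$ has order $7$, so $U^2\ne\1$, and $U^2\in\bU$, so $(U^2\vert\1)\in\cU$ — this is clean. The only residual subtlety is the counting step "since no three of four points of a plane are collinear, four planes of the spread are needed to cover them," which is just the observation that a line of $\PG(5,2)$ meets a plane of a spread in at most one point (two points of a line in a spread-plane would force the whole line, hence a second spread-plane sharing a point, contradicting skewness); I would state this once and reuse it.
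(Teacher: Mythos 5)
Your proposal is correct and follows essentially the same route as the paper: transport the configuration of $(D_1\vert\1)$ and $(\1\vert\1)$ by the collineation induced by $\diag(U,U^{-1})$ using the orbit decomposition of Lemma \ref{lemmagroup}, apply Remark \ref{meet} and the spread property of Remark \ref{spreads}, and distribute the four residual points of $(X\vert\1)$ over four distinct spread planes, noting that no three of them are collinear. The one blemish is the clause ``one of which is $(\1\vert\1)$ iff $X=D_1$'', whose condition is reversed --- by Remark \ref{meet} it is precisely for $X\in\bD\setminus\{D_1\}$ that $(\1\vert\1)$ absorbs one of the four residual points --- but the conclusion you draw for case (b) is the correct one, and the paper's own exposition contains the identical slip.
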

By the above, for each $X\in\bU$ there is a unique in $Y\in\bV$ such that
$(X\vert \1)$ is skew to $(Y\vert \1)$ (and vice versa). We write $Y=X'$ (and
$X=Y'$). A direct computation using (\ref{rank}) yields that $U_i'=V_i$.

Now we study collinearity in $\GQ(\cS)$. We write  $(X\vert \1)\sim (Y\vert \1)$ iff
the planes $(X\vert \1), (Y\vert \1)\in\cS$ are collinear. By definition of $\GQ(\cS)$ this is equivalent to $X\sim Y$. Using Propositions \ref{collinear} and
\ref{1.1}, we obtain the following:
\begin{prop}\label{collPlanes}
Let $(X\vert \1), (Y\vert \1)\in\cS$. Then the following holds:
\begin{enumerate}
\item[\rm(a)] If $X,Y\in\bU\cup \bV$, then $(X\vert \1)\sim (Y\vert \1)$ iff
    the two planes meet. This is the case exactly if $X=Y$ or $X\in\bU$,
    $Y\in\bV\setminus\{X'\}$ or $X\in\bV$, $Y\in\bU\setminus\{X'\}$.
\item[\rm(b)] If $X\in\bU\cup \bV, Y\in\bD$, then $(X\vert \1)\sim (Y\vert
    \1)$ iff the two planes are skew. For each $Y\in\bD$ there are exactly
    two matrices $X_1,X_2\in\bU$ and two matrices $X_3,X_4\in\bV$
    satisfying this condition; moreover, w.l.o.g., $X_3=X_1'$ and
    $X_4=X_2'$.
\item[\rm(c)] If $X,Y\in\bD$, then $(X\vert \1)\sim (Y\vert \1)$ iff the two
    planes meet. For each $X\in\bD$ there are exactly $6$ matrices
    $Y\in\bD$, $Y\ne X$, satisfying this condition.
\end{enumerate}
\end{prop}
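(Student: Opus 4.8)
The plan is to translate each case of Proposition~\ref{collPlanes} into the language already set up in Propositions~\ref{collinear} and~\ref{1.1}, using the dictionary ``collinear in $\GQ(\cS)$'' $\iff$ ``collinear in $\GQ(\bS)$'' $\iff$ the determinant conditions of Proposition~\ref{collinear}, together with the rank/intersection formula~(\ref{rank}) that converts ``$(X\vert\1)$ meets $(Y\vert\1)$'' into ``$\rk(X+Y)\le 2$'', i.e.\ ``$\det(X+Y)=0$''. So the whole statement is really a bookkeeping exercise combining three facts: $\det(X+Y)=0 \iff$ the planes meet; Proposition~\ref{collinear}(a),(b),(c) for the collinearity criterion; and Lemma~\ref{lemmagroup}/Proposition~\ref{1.1} for the counting of how many $Y$ satisfy each condition.

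For part (a), with $X,Y\in\bU\cup\bV$: by Proposition~\ref{collinear}(a), $X\sim Y \iff \det(X+Y)=0$, and by~(\ref{rank}) this is exactly the condition that $(X\vert\1)$ and $(Y\vert\1)$ meet (in a point or a line; if $X=Y$ they coincide). To get the explicit list, I would invoke Remark~\ref{spreads}: within the spread $\cU'$ any two distinct planes of $\cU$ are skew, likewise within $\cV'$, so a meet between distinct planes forces one to lie in $\cU$ and the other in $\cV$; and the unique skew partner of $(X\vert\1)$ among the opposite family is $(X'\vert\1)$ by the definition of $X'$. Hence for $X\in\bU$ the planes meeting $(X\vert\1)$ among $\cU\cup\cV$ are precisely those $(Y\vert\1)$ with $Y\in\bV\setminus\{X'\}$ (plus $Y=X$ itself), and symmetrically for $X\in\bV$.

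For part (b), $X\in\bU\cup\bV$, $Y\in\bD$: Proposition~\ref{collinear}(c) gives $X\sim Y \iff \det(X+Y)=1 \iff \rk(X+Y)=3$, which by~(\ref{rank}) means $(X\vert\1)\cap(Y\vert\1)=0$, i.e.\ the planes are skew. The count then comes straight from Proposition~\ref{1.1}: reading that proposition with the roles of $\bU$ and $\bV$ interchanged, each $(Y\vert\1)$ with $Y\in\bD$ is skew to exactly two planes of $\cU$ and (by the ``same holds'' clause) to exactly two planes of $\cV$, giving $X_1,X_2\in\bU$ and $X_3,X_4\in\bV$. To see one may take $X_3=X_1'$, $X_4=X_2'$: if $(X_1\vert\1)$ is skew to $(Y\vert\1)$, consider its unique skew partner $(X_1'\vert\1)$ in $\cV$; I would argue that $(X_1'\vert\1)$ is also skew to $(Y\vert\1)$ — the cleanest route is that $\det(X_1+Y)=1$ is equivalent to $Y\not\sim X_1$ being collinear-complementary, and the spread/orbit structure of Lemma~\ref{lemmagroup} pairs up the two $\bU$-solutions with the two $\bV$-solutions; since there are exactly two on each side, the pairing $X_i\mapsto X_i'$ must be a bijection between them. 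For part (c), $X,Y\in\bD$: Proposition~\ref{collinear}(b) gives $X\sim Y \iff \det(X+Y)=0 \iff$ the planes meet, by~(\ref{rank}) again. The number~$6$ is the number of neighbours of a point in $\GQ(2,2)$, since $\bD$ carries the subquadrangle $\GQ(\bD)\cong\GQ(2,2)$: each point of $\GQ(2,2)$ lies on $1+t=3$ lines each with $1+s=2$ other points, so $3\cdot 2=6$ collinear points.

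The main obstacle is the w.l.o.g.\ claim $X_3=X_1'$, $X_4=X_2'$ in part (b): everything else is a direct substitution into already-proved statements, but this requires showing that skewness to $(Y\vert\1)$ with $Y\in\bD$ is compatible with the $\bU$–$\bV$ pairing $X\mapsto X'$. I expect this follows either from a short symmetry argument (an involution of $\cS$ fixing $\cD$ and swapping $\cU\leftrightarrow\cV$ via $X\mapsto X'$, consistent with $U_i'=V_i$), or simply by the counting squeeze: two skew $\bU$-planes, two skew $\bV$-planes, and the fact that the bijection $\cU\to\cV$, $(X\vert\1)\mapsto(X'\vert\1)$ must carry the former pair into a pair skew to $(Y\vert\1)$ — hence onto the latter pair — which is exactly what a direct computation with~(\ref{rank}) on the listed matrices confirms if one wants to be fully explicit.
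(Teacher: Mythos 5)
Your proposal is correct and follows essentially the same route as the paper: in all three parts the paper also combines Proposition~\ref{collinear} with the rank/intersection formula~(\ref{rank}) to translate the determinant conditions into ``meet''/``skew'', and then reads off the counts from Proposition~\ref{1.1}. Two points of divergence are worth noting. First, for the count of $6$ in part~(c) the paper argues globally: each point of $\GQ(2,4)$ is collinear with exactly $10$ others, and parts~(a),(b) account for $4$ of them in $\bU\cup\bV$, leaving $6$ in $\bD$; you instead count inside the subquadrangle $\GQ(\bD)\cong\GQ(2,2)$ ($3$ lines through a point, $2$ further points on each). Both are valid; yours implicitly uses that collinearity in the subquadrangle agrees with collinearity in $\GQ(\bS)$ restricted to $\bD$, which does hold since $\pi(\bD)$ is a hyperplane section of $\cQ$, so any line of $\cQ$ through two of its points lies in that hyperplane. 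Second, for the claim $X_3=X_1'$, $X_4=X_2'$ in part~(b): your ``counting squeeze'' is not by itself a proof --- knowing there are exactly two skew partners of $(Y\vert\1)$ in each of $\cU$ and $\cV$ does not force the bijection $X\mapsto X'$ to match them up; one needs an actual symmetry of the configuration fixing $Y$ (or at least $\cD$ pointwise enough) and intertwining with the priming map, which you do not exhibit. You correctly sense this is the weak spot and fall back on direct computation with~(\ref{rank}), which is exactly what the paper does (``can be checked by an explicit computation; see also Prop.~\ref{iso}''), so the end result is sound.
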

\begin{proof}
(a): By \ref{collinear} (a) we have that $(X\vert \1)\sim (Y\vert \1)$ iff $0=\det
(X+Y)$, which by (\ref{rank}) is equivalent to the statement that the planes
meet. The second statement follows from \ref{1.1}.

(b): By \ref{collinear} (c)
we have that $(X\vert \1)\sim (Y\vert \1)$ iff $1=\det (X+Y)$, which by
(\ref{rank}) is equivalent to the statement that the planes are skew. The
second statement follows from \ref{1.1}. The statement that $X_3=X_1'$
and $X_4=X_2'$ can be checked by an explicit computation. (See also Prop.\ \ref{iso}.)

(c): The first statement can be shown as in (a). The second statement follows
from (a) and (b) and the fact that each point $p$ of $\GQ(2,4)$ is collinear to
exactly $10$ points different from~$p$.
\end{proof}

The statements from above suggest how to find an explicit isomorphism from
$\GQ(2,4)$ onto $\GQ(\bS)$ (or $\GQ(\cS)$) such that $\bD$ corresponds to the
doily and $\bU,\bV$ correspond to the double-six.
\begin{prop}\label{iso}
The following is an isomorphism from $\GQ(\bS)$ to $\GQ(2,4)$:
\begin{equation*} \forall i\in\{1,2,\ldots,6\}: U_i\mapsto i,\; V_i\mapsto i',
\end{equation*}
\begin{equation*} D_1\mapsto \{3,5\},\; D_2\mapsto \{1,4\}, \; D_3\mapsto \{2,6\}, \; D_4\mapsto \{1,2\}, \; D_5\mapsto \{4,5\},\end{equation*}
\begin{equation*} D_6\mapsto \{1,6\}, \; D_7\mapsto \{3,4\}, \; D_8\mapsto \{3,6\}, \; D_9\mapsto \{2,5\}, \; D_{10}\mapsto \{4,6\}, \end{equation*}
\begin{equation*} D_{11}\mapsto \{1,3\}, \; D_{12}\mapsto \{1,5\}, \; D_{13}\mapsto \{2,4\}, \; D_{14}\mapsto \{2,3\}, \; D_{15}\mapsto \{5,6\}
\end{equation*}
\end{prop}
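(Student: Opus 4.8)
The plan is to verify that the explicitly given bijection $\phi\colon\bS\to\GQ(2,4)$ is an isomorphism of incidence structures, i.e.\ that for all $X,Y\in\bS$ we have $X\sim Y$ in $\GQ(\bS)$ if and only if $\phi(X)$ and $\phi(Y)$ are collinear in the model of $\GQ(2,4)$ described in Section~2 (doily plus double-six). Since both geometries have the same parameters $(2,4)$ and a bijection between them is prescribed, it suffices to check that collinearity is preserved; preservation of non-collinearity is then automatic, because in a $\GQ(2,4)$ every point is collinear with exactly $10$ others, so a collinearity-preserving bijection whose source and target have equal valencies must reflect collinearity as well. (Alternatively one checks lines map to lines, there being $45$ of each.)

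The verification naturally splits along the three cases of Proposition~\ref{collinear} and the three types of lines in the Section~2 model: (i) lines $\{i,\{i,j\},j'\}$ joining a $\bU$-point, a doily point and a $\bV$-point; (ii) lines inside the doily, i.e.\ triples $\{A,B,C\}$ of $2$-subsets of $\{1,\dots,6\}$ partitioning it; and there are no lines joining two double-six points, consistent with Proposition~\ref{collinear}(a) once one checks $U_i\not\sim U_j$ and $V_i\not\sim V_j$ and $U_i\sim V_j\iff i\ne j$. For (i): by Proposition~\ref{collinear}, $U_i\sim V_j$ iff $\det(U_i+V_j)=0$, and $U_i\sim D_k$, $V_j\sim D_k$ iff $\det(U_i+D_k)=1$, $\det(V_j+D_k)=1$; I would tabulate these $6\times 6$, $6\times15$, $6\times15$ determinant patterns (each a short $\GF(2)$ computation) and confirm they match the incidence pattern of the $30$ lines $\{i,\{i,j\},j'\}$ under $\phi$. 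Here the earlier structural results do most of the work: Proposition~\ref{1.1} already tells us each $D_k$ is collinear with exactly two $U$'s and two $V$'s, and Proposition~\ref{collPlanes}(b) that these pair up via $U_i'=V_i$, so for each $D_k$ only the identity of the index pair $\{i,j\}$ with $\phi(D_k)=\{i,j\}$ needs checking, and $U_i\sim V_j\iff i\ne j$ pins down the $\bU$--$\bV$ incidences. For (ii): by Proposition~\ref{collinear}(b), $D_k\sim D_\ell$ iff $\det(D_k+D_\ell)=0$; I would confirm that under $\phi$ this holds exactly when the corresponding $2$-subsets are disjoint, and then that disjoint-but-not-complementary versus the full partition condition sorts out which collinear triples actually form doily lines. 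The consistency constraint that each doily point lie on exactly three doily lines and that the orbit structure of Lemma~\ref{lemmagroup} (each $G$-orbit on $\bD\cup\bV$ containing one of $D_1,D_2,D_3$) be respected gives strong cross-checks.

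Concretely I would organize the computation around the Singer-cycle action: fix the isomorphism on one orbit representative — say send $D_1\mapsto\{3,5\}$ and $U_1\mapsto 1$ — and propagate using the generator of $G=\{\1\}\cup\bU$ acting by $X\mapsto UXU$ on $\bD\cup\bV$ and by $i\mapsto i{+}1$, $i'\mapsto (i{+}1)'$, $\{i,j\}\mapsto\{i{+}1,j{+}1\}$ cyclically on the $\GQ(2,4)$ side; one checks the given $U_i\mapsto i$, $V_i\mapsto i'$ is equivariant and that the three orbit representatives $D_1,D_2,D_3$ land on a valid transversal $\{3,5\},\{1,4\},\{2,6\}$ of the three doily-line-parallel-classes. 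Then only $O(1)$ incidences need to be checked by hand — the rest follow by equivariance — after which the matching of all $45$ lines is a finite but manageable bookkeeping exercise.

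The main obstacle is simply the bulk of the case analysis and the bookkeeping of matching $45$ lines across two different combinatorial encodings without error; there is no conceptual difficulty once Propositions~\ref{collinear}, \ref{1.1} and \ref{collPlanes} and Lemma~\ref{lemmagroup} are in hand. The one genuinely delicate point is making the reduction ``collinearity-preserving bijection of two $\GQ(2,4)$'s is an isomorphism'' airtight: one must invoke that both sides are generalized quadrangles of the same order (so that each point has exactly $11$ neighbours counting itself), whence a bijection preserving $\sim$ maps the neighbourhood of $X$ into that of $\phi(X)$, and by cardinality onto it, forcing lines to lines; I would state this explicitly rather than leave it implicit.
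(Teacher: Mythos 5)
Your proposal is correct and follows essentially the same route as the paper, whose entire proof is the single line ``Explicit computation using Proposition~\ref{collinear}'': you verify collinearity via the determinant criteria of Proposition~\ref{collinear} and match the result against the doily-plus-double-six model. Your additional care about why a collinearity-preserving bijection between two $\GQ(2,4)$'s must be an isomorphism (equal valencies force reflection of collinearity, and absence of triangles means lines are recoverable from the collinearity graph) and your use of the Singer-cycle equivariance to cut down the bookkeeping are sound refinements of, not departures from, the paper's argument.
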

\begin{proof}
Explicit computation using \ref{collinear}.
\end{proof}

We mention still another model:
The mapping $\pi:X\mapsto X+\1$ from above can be transferred to the planes
$(X\vert\1)$ via $(X\vert\1)\mapsto (X+\1\vert\1)$. This is induced by a
collineation of $\PG(5,2)$, namely, the one given by the $6\times 6$ matrix
$\begin{pmatrix} \1&\1\\0&\1\end{pmatrix}$. We call the image of $\cS$ under
this collineation $\pi(\cS)$. Then it is clear that $\pi(\cS)$ is the image of
$\pi(\bS)=\cQ$ under the representation $X\mapsto (X\vert\1)$, and $\pi(\cS)$
can be seen as the point set of another model of $\GQ(2,4)$. As in $\GQ(\cQ)$,
we then have that $(X\vert \1),(Y\vert\1)\in\pi(\cS)$ are collinear iff
$0=\langle X\vert Y\rangle=\langle X\vert Y\rangle_0=\det(X+Y)+\det X+\det Y$.
The point set $\pi(\cS)$ consists exactly of those planes $(X\vert \1)$, $X\in
J=\s(3,2)$, that are skew to $(\1\vert\1)$ (because by (\ref{qdet2}) we have
$0=q(X)=\det(X+\1)+1$) and different from $(0\vert\1)$.

\section{An interesting physical application}

The ideas discussed above can also be relevant from a physical point of view. This is because the geometry of the generalized quadrangle GQ$(2,4)$ reproduces completely the properties of the $E_{6(6)}$-symmetric entropy formula describing black holes and black strings in $D=5$ supergravity \cite{levayetal}. As a detailed discussion of this issue lies far beyond the scope of the present paper we just outline the gist, referring the interested reader to \cite{levayetal} for more details and further literature. The 27 black hole/string charges correspond to the points and the 45 terms in the entropy formula to the lines of GQ$(2,4)$. Different truncations with 15, 11 and 9 charges are represented by three distinguished subconfigurations of GQ$(2,4)$, namely by a copy of GQ$(2,2)$, the set of points collinear to a given point, and a copy of GQ$(2,1)$, respectively. In order to obtain also the correct {\it signs} for the terms in the entropy formula, it was necessary to employ a {\it non}-commutative labelling for the points of GQ$(2,4)$. This was furnished by certain elements of the real three-qubit Pauli group \cite{levayetal};  now it is obvious that the set of invertible matrices from $J$ = $\s(3,2)$ lends itself as another candidate to do such job. 	A link between the two labellings can, for example, be established by employing the plane representation of $\bS=J^*\setminus\{\1\}$, transforming each matrix via associated Pl\"ucker coordinates into a binary six-vector (eq.\,(\ref{alpha})) 
and encoding the latter in a particular way into the tensor product of a triple of the real Pauli matrices (see \cite{vl} for motivations and further details of the final step of such correspondence in a more general physical setting).


\bigskip

{\bf Acknowledgements.} The work on this topic began in the framework of the ZiF Cooperation Group ``Finite Projective Ring Geometries: An Intriguing Emerging Link Between Quantum Information Theory, Black-Hole Physics, and Chemistry of Coupling," held in 2009 at the Center for Interdisciplinary Research (ZiF), University of Bielefeld, Germany. It was also partially supported by the VEGA grant agency, projects 2/0092/09 and 2/0098/10. We thank Prof. Hans Havlicek (Vienna) for several clarifying comments and Dr. Petr Pracna (Prague) for an electronic version of the figure.


\end{document}